\theoremstyle{definition}
\newtheorem{theorem}{Theorem}[section]
\newtheorem{lemma}[theorem]{Lemma}
\newtheorem{remark}[theorem]{Remark}
\newcommand{\R}{\mathbb{R}}
\newcommand{\C}{\mathbb{C}}
\newcommand{\Z}{\mathbb{Z}}
\newcommand{\Q}{\mathbb{Q}}
\DeclareMathOperator{\PSL}{PSL}
\DeclareMathOperator{\SL}{SL}
\title{A counterexample to the simple loop conjecture for $\PSL(2,\R)$}
\author{Kathryn Mann}
\date{}
\begin{document}

\maketitle

\begin{abstract}
In this note, we give an explicit counterexample to the simple loop conjecture for representations of surface groups into $\PSL(2,\R)$.  Specifically, we show that for any surface with negative Euler characteristic and genus at least 1, there are uncountably many non-conjugate, non-injective homomorphisms of its fundamental group into $\PSL(2,\R)$ that kill no simple closed curve (nor any power of a simple closed curve).  This result is not new -- work of Louder and Calegari for representations of surface groups into $\SL(2, \C)$ applies to the $\PSL(2,\R)$ case, but our approach here is explicit and elementary.

\end{abstract}

%---------------------------------------------------------------------------------
\setcounter{section}{0}

%---------------------------------------------------------------------------------
\section{Introduction} 

The simple loop conjecture, proved by Gabai in \cite{Ga}, states that any non-injective homomorphism from a closed surface group to another closed surface group has an element represented by a simple closed curve in the kernel.  It has been conjectured that the result still holds if the target is replaced by the fundamental group of an orientable 3-manifold (see Kirby's problem list in \cite{Ki}).  Although special cases have been proved (e.g. \cite{Ha}, \cite{RW}), the general hyperbolic case is still open. 

Recently, Cooper and Manning showed that if instead of a 3-manifold group the target group is $\SL(2, \C)$, then the conjecture is false.  Precisely, they show: 

\begin{theorem}[Cooper-Manning \cite{CM}]  \label{cmthm} Let $\Sigma$ be a closed orientable surface of genus $g \geq 4$.  Then there is a homomorphism $\rho: \pi_1(\Sigma) \to \SL(2, \C)$ such that 
\begin{enumerate} 
\item $\rho$ is not injective
\item If $\rho(\alpha) = \pm I$, then $\alpha$ is not represented by a simple closed curve
\item If $\rho(\alpha)$ has finite order, then $\rho(\alpha) = I$
\end{enumerate}
The third condition implies in particular that no \emph{power} of a simple closed curve lies in the kernel.  
\end{theorem}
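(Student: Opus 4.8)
The plan is to run a dimension count on the $\SL(2,\C)$-representation variety, exploiting the fact that a complex algebraic variety of positive dimension is never a countable union of proper closed subvarieties. Write $R = \Hom(\pi_1(\Sigma), \SL(2,\C))$; since $\pi_1(\Sigma)$ is finitely presented this is a complex affine variety, whose components containing irreducible representations have dimension $6g-3$. Conditions (2) and (3) each amount to a \emph{countable} family of closed algebraic constraints, while non-injectivity can be forced by a single algebraic relation, so the whole argument reduces to showing that a suitable non-injective locus is not swallowed by the countably many ``bad'' subvarieties.

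First I would organize the forbidden conditions. Up to conjugacy there are only countably many simple closed curves; enumerate representatives $\gamma_1, \gamma_2, \dots$. For fixed $\gamma_i$ the trace function $t_i \colon R \to \C$, $\rho \mapsto \trace\rho(\gamma_i)$, is regular; violating (2) means $\rho(\gamma_i)$ is central (equivalently $\rho(\gamma_i) = \pm I$), a closed condition, while violating (3) means $t_i(\rho) \in \{2\cos(\pi p/q) : p/q \in \Q\}$, a countable set. Hence for each $i$ the set $B_i$ of representations failing (2) or (3) at $\gamma_i$ is a countable union of proper subvarieties, \emph{provided} each $t_i$ is non-constant on the relevant component, and $B = \bigcup_i B_i$ remains such a countable union.

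Next I would pin down a non-injective family generically disjoint from $B$. Fix a non-simple element $w \in \pi_1(\Sigma)$, not a proper power, chosen so that its normal closure $\langle\langle w \rangle\rangle$ contains no power of a simple closed curve, and set $Z = \{\rho \in R : \rho(w) = I\}$. Every $\rho \in Z$ is non-injective, giving (1), and the normal-closure condition guarantees that killing $w$ does not \emph{force} any $\gamma_i$ (or a power of one) into the kernel. It then suffices to exhibit, for each $i$, a single $\rho \in Z$ with $t_i(\rho)$ off the countable bad set of $\gamma_i$: this shows each $B_i \cap Z$ is a proper subvariety of an irreducible component $Z_0$ of $Z$, so $Z_0 \not\subseteq B$, and any $[\rho] \in Z_0 \setminus B$ satisfies (1)--(3).

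The main obstacle is the non-degeneracy input, and it is exactly here that the genus hypothesis enters. One must (i) produce $w$ whose normal closure avoids every power of a simple closed curve, a surface-group statement that cannot hold for an arbitrary non-injective quotient: by Gabai's theorem a non-injective map to a closed surface group always kills a simple curve, so $\pi_1(\Sigma)/\langle\langle w\rangle\rangle$ must be arranged to ``remember'' all simple classes; and (ii) show that $Z_0$ has positive dimension and that each $t_i$ genuinely varies on it, i.e.\ that after imposing $\rho(w) = I$ there remains enough moduli to push every simple-curve trace off its forbidden values. Both demands require a large supply of free handles, and it is the count making them simultaneously satisfiable that forces $g \geq 4$ rather than merely $g \geq 1$; verifying (i) and (ii) is the heart of the argument, whereas the countability bookkeeping above is routine.
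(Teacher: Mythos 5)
A point of orientation first: the paper does not prove this statement at all. Theorem \ref{cmthm} is quoted from Cooper and Manning \cite{CM}, and the paper only summarizes their method (a dimension count on the $\SL(2,\C)$ character variety plus a proof that a specific subvariety is irreducible and smooth on a dense subset) before developing a completely different, elementary construction for its own Theorem \ref{main}. So your proposal cannot be checked against an in-paper argument; it has to stand on its own, and as written it does not. Your overall strategy is at least of the same genre as the cited proof (a dimension count and a ``countably many proper subvarieties'' argument, which is valid over $\C$ since a positive-dimensional irreducible complex variety is never a countable union of proper Zariski-closed subsets), but everything that makes the theorem true is deferred rather than proved.

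Concretely: (a) you posit a non-simple, non-proper-power $w$ whose normal closure $\langle\langle w\rangle\rangle$ contains no power of any simple closed curve; no construction or citation is given, and this is a nontrivial surface-group statement. (b) More seriously, even granting (a), the normal-closure property does not deliver what your argument needs. To show each $B_i \cap Z_0$ is a proper subvariety you must exhibit, for every simple closed curve $\gamma_i$, an actual representation in the chosen component $Z_0$ of $\{\rho : \rho(w)=I\}$ under which $\gamma_i$ does not map to a nontrivial finite-order element. Knowing $\gamma_i^k \notin \langle\langle w\rangle\rangle$ only says the quotient group does not force the kill; it does not produce an $\SL(2,\C)$ representation of that quotient detecting $\gamma_i$, and producing such representations is essentially the content of the theorem, so the argument is circular at precisely this point. (c) You also need $Z_0$ to be irreducible (or to locate one irreducible component), of positive dimension, with the trace functions nonconstant on it; you yourself flag this as ``the heart'' and give no argument --- this is exactly the irreducibility/smoothness work the actual Cooper--Manning proof carries out and where $g \geq 4$ enters, so the proposal omits the mathematical core. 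A smaller issue: condition 3 quantifies over all $\alpha \in \pi_1(\Sigma)$, not just simple closed curves, so the enumeration of bad loci must run over all (countably many) conjugacy classes, with elements of $\langle\langle w\rangle\rangle$ handled separately.
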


Inspired by this, we asked whether a similar result holds for $\PSL(2,\R)$, this being an intermediate case between Gabai's result for surface groups and Cooper and Manning's for $\SL(2,\C)$.  Cooper and Manning's proof uses a dimension count on the $\SL(2,\C)$ character variety and a proof that a specific subvariety is irreducible and smooth on a dense subset, much of which does not carry over to the $\PSL(2, \R)$ case.  In general, complex varieties and their real points can behave quite differently.   However, we show here with different methods that an analogous result does hold.  

While this note was in progress, we learned of work of Louder and Calegari (independently in \cite{Lo} and \cite{Ca}) that can also be applied to answer our question in the affirmative. Louder shows the simple loop conjecture is false for representations into limit groups, and Calegari gives a practical way of verifying no simple closed curves lie in the kernel of a non-injective representation using stable commutator length and the Gromov norm.  

The difference here is that our construction is entirely elementary.  We use an explicit representation from DeBlois and Kent in \cite{DK} and verify that this representation it is non injective and kills no simple closed curve by elementary means.   Our end result parallels that of Cooper and Manning but also include surfaces with boundary and all genera at least 1: 

\begin{theorem}\label{main}
Let $\Sigma$ a surface of negative Euler characteristic and of genus $g \geq 1$
, possibly with boundary.  
Then there is a homomorphism $\rho: \pi_1(\Sigma) \to \SL(2, \R)$ such that 
\begin{enumerate} 
\item $\rho$ is not injective
\item If $\rho(\alpha) = \pm I$, then $\alpha$ is not represented by a simple closed curve
\item In fact, if $\alpha$ is represented by a simple closed curve, then $\rho(\alpha^k) \neq 1$ for any $k \in \Z$.  
\end{enumerate}
Moreover, there are uncountably many non-conjugate representations satisfying 1. through 3.  
\end{theorem}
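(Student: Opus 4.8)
The plan is to reduce the whole theorem to the once-holed torus $T$, whose fundamental group is the free group $F_2 = \langle a, b\rangle$ with the boundary represented by the commutator $[a,b]$. On $T$ the essential simple closed curves are completely classified: up to isotopy they are the separating boundary curve $[a,b]$ together with the non-separating curves, and the latter correspond exactly to the primitive conjugacy classes of $F_2$, i.e.\ to the slopes $p/q \in \Q \cup \{\infty\}$ realized by primitive words $w_{p/q}$. Because condition (3) is the strongest of the three---it says precisely that $\rho(\alpha)$ has infinite order in $\PSL(2,\R)$ for every simple closed curve $\alpha$, and this already forces (2)---the problem on $T$ becomes: build a non-injective $\rho \colon F_2 \to \PSL(2,\R)$ for which $\rho([a,b])$ and every $\rho(w_{p/q})$ is of infinite order, that is hyperbolic, parabolic, or an irrational elliptic rotation, and never a finite-order rotation or the identity.

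To obtain \emph{uncountably many} examples, note that any one-parameter family of non-injective representations must kill a single fixed element: the non-injective locus is the countable union of the subvarieties $\{\rho : \rho(w) = \pm I\}$ over $w \in F_2$, and a curve meeting infinitely many points of such a union must lie inside one of them. Moreover $\{\rho : \rho(w) = \pm I\}$ is positive-dimensional in the three-dimensional character variety only when $w = u^n$ is a proper power, in which case the condition reduces to requiring $\rho(u)$ to be a finite-order elliptic---a single trace equation. I would therefore arrange the kernel to contain a power $u^n$ of a word $u$ that is \emph{not} a power of any simple closed curve---for instance $u = a^2 b^2$, whose homology class $2(1,1)$ is imprimitive and which is not a proper power in $F_2$, so that neither $u$ nor any $u^k$ is a power of a slope or of the boundary. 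Concretely I would take the explicit family $\rho_t$ of DeBlois--Kent, recorded in Fricke coordinates $x = \trace\rho_t(a)$, $y = \trace\rho_t(b)$, $z = \trace\rho_t(ab)$, and fix $\trace\rho_t(a^2b^2)$ at a value $2\cos(k\pi/n)$ so that $\rho_t(u)$ is a genuine rotation of order $n$; this makes every $\rho_t$ non-injective (condition 1) while leaving a positive-dimensional parameter space.

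The heart of the argument, and the step I expect to be the main obstacle, is verifying conditions (2)--(3) across this family: that despite the torsion element $\rho_t(a^2b^2)$, no slope $w_{p/q}$, no power of a slope, and no power of the boundary is ever sent to a finite-order element. This demands simultaneous control of $\trace\rho_t(w_{p/q})$ for \emph{all} $p/q$, and there is a genuine tension, since pushing $\trace\rho_t(a^2b^2)$ into $(-2,2)$ to create the torsion pulls against keeping the slope traces outside the finite-order values. I would exploit the recursive structure of the primitive words: the trace polynomials obey the Fricke identity $\trace\rho(uv) + \trace\rho(uv^{-1}) = \trace\rho(u)\trace\rho(v)$, and all $w_{p/q}$ are generated from $a, b, ab$ along the Stern--Brocot tree. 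The plan is to pin the parameters to a regime where a monotonicity/ping-pong induction of the kind DeBlois--Kent use to prove faithfulness forces $|\trace\rho_t(w_{p/q})| \geq 2$ for every slope, uniformly in $p/q$, so that every slope is hyperbolic or parabolic; the boundary trace $\trace\rho_t([a,b]) = x^2 + y^2 + z^2 - xyz - 2$ is then handled by the same estimates. Ruling out that some high-complexity slope accidentally lands on a finite-order elliptic value, and controlling the full kernel rather than just the normal closure of $u^n$, is exactly where the explicit representation and a clean induction are indispensable.

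Finally, to separate the representations up to conjugacy I would let the parameters vary so that a fixed trace, say $\trace\rho_t(a)$, moves continuously and non-constantly; since trace is a conjugacy invariant this yields uncountably many non-conjugate $\rho_t$, all satisfying (1)--(3). To pass from $T$ to an arbitrary orientable $\Sigma$ of genus $g \geq 1$ and negative Euler characteristic, I would fix an essential once-holed torus $T \subset \Sigma$, put $\rho_t$ on $\pi_1(T)$, and extend over $\pi_1(\Sigma)$ by a discrete faithful holonomy on the complementary subsurface, glued so that the two boundary holonomies agree. A simple closed curve isotopic into $T$ is covered by the core construction, while any curve crossing $\partial T$ can be forced to be hyperbolic by choosing the complementary hyperbolic structure to have a sufficiently long boundary geodesic, so that a collar/ping-pong estimate makes its holonomy hyperbolic. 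This extension is more technical but conceptually routine once the once-holed torus case is settled.
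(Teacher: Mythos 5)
Your outline is not a proof: the two steps you yourself flag as ``the main obstacle'' and ``more technical but conceptually routine'' are exactly where all the content lies, and neither is carried out. For the once-holed torus, you want $\trace\rho(a^2b^2)$ pinned at $2\cos(k\pi/n)$ (to get non-injectivity) while every primitive word and the boundary commutator avoids all finite-order elliptic values; the proposed mechanism --- ``a monotonicity/ping-pong induction of the kind DeBlois--Kent use'' over the Stern--Brocot tree --- is never formulated, and it also mischaracterizes the relevant DeBlois--Kent technique, which is not a trace estimate at all but an algebraic argument: twist one side of an amalgam by $\lambda_t = \bigl( \begin{smallmatrix} 1 & t \\ 0 & 1 \end{smallmatrix} \bigr)$ with $t$ transcendental over the entry field and use the degrees of the resulting polynomial entries to rule out finite order (Lemma \ref{transcendental} in the paper). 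The paper in fact avoids your ``genuine tension'' entirely: it makes the genus-one piece \emph{reducible} (upper triangular), so non-injectivity is free from solvability, every simple closed curve in that piece has explicit trace $\alpha^s\beta^k + \alpha^{-s}\beta^{-k}$ and is hyperbolic once $\alpha^s\beta^k \neq 1$, and the boundary is parabolic; no finite-order elliptic has to be engineered and no infinite family of trace inequalities has to be verified.

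The extension from the once-holed torus $T$ to $\Sigma$ is an even more serious gap. Gluing your non-injective $\rho_t$ to a Fuchsian representation of the complement and claiming that every simple closed curve crossing $\partial T$ ``can be forced to be hyperbolic by choosing the complementary hyperbolic structure to have a sufficiently long boundary geodesic'' has no justification: the torus side is non-discrete (in your plan it even contains a finite-order elliptic), so Maskit-type combination and ping-pong arguments do not apply, and lengthening the boundary geodesic gives no control over how the torus-side isometries interact with the complement; an alternating word $a_1b_1\cdots a_lb_l$ could a priori be elliptic of finite order. This crossing case is precisely where the paper works hardest: it isotopes a simple representative so the word becomes an alternating product of simple loops, eliminates appearances of the boundary element so that each $a_i$ has nonzero $2{,}1$ entry and each $b_i$ is hyperbolic, and then invokes the transcendental-$t$ degree argument to exclude finite order. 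Without an analogue of that lemma (or some other concrete mechanism), both your non-separating-torus step and your gluing step remain assertions, so the proposal as it stands does not establish conditions (2)--(3) of Theorem \ref{main}.
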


\section{Proof of theorem \ref{main}} 

We first present a construction of a (non-injective) representation from DeBlois and Kent in \cite{DK}, and then show that no power of a simple closed curve lies in the kernel of this representation.  The full construction appears in \cite{DK}, we describe it here for convenience. 

Let $\Sigma$ be a surface of genus $g \geq 1$ and negative Euler characteristic, possibly with boundary. Assume for the moment that $\Sigma$ is not the once-puntured torus -- Theorem \ref{main} for this case will follow easily later on.  

Let $c \subset \Sigma$ be a simple closed curve separating $\Sigma$ into a genus 1 subsurface with single boundary component $c$, and a genus ($g-1$) subsurface with one or more boundary components.  Let $\Sigma_A$ denote the genus $(g-1)$ subsurface and $\Sigma_B$ the genus 1 subsurface.  See Figure \ref{setup} below.  Finally, we let $A= \pi_1(\Sigma_A)$ and $B = \pi_1(\Sigma_B)$, so that $\pi_1(\Sigma) = A \ast_C B$, where $C$ is the $\Z$-subgroup generated by the element $[c]$ represented by the curve $c$.  We assume that the basepoint for $\pi_1(\Sigma)$ lies on $c$.   

  \begin{figure*}[h]
  \centerline{
    \mbox{\includegraphics[width=2.5in]{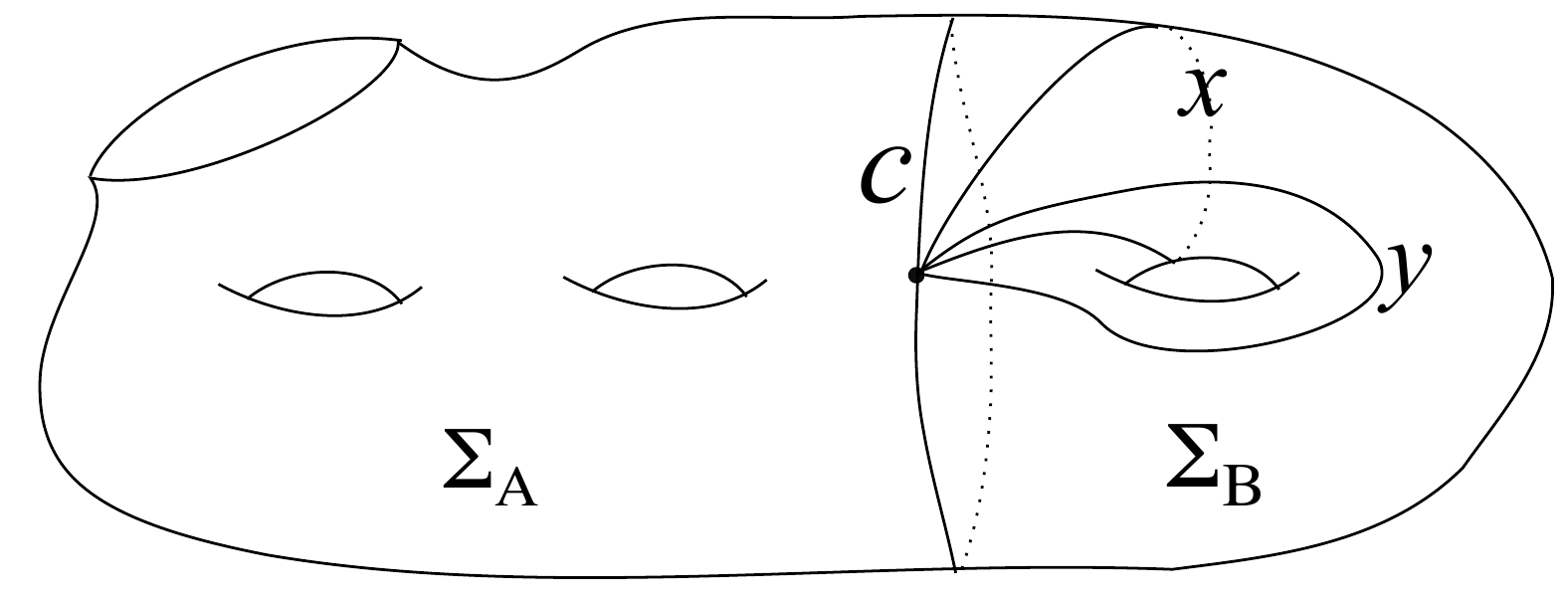}}}
 \caption{The setup: decomposition of $\Sigma$ and generators $x$ and $y$ for $B$}
  \label{setup}
  \end{figure*}

Let $x \in B$ and $y \in B$ be generators such that $B = \langle x, y \rangle$, and that $c$ represents the commutator $[x,y]$.   Fix $\alpha$ and $\beta$ in $\R \setminus \{0,\pm1\}$, and following \cite{DK} define $\phi_B: B \to \SL(2,\R)$ by 

$$\phi_B(x) = 
 \begin{pmatrix}
  \alpha & 0 \\
  0 & \alpha
 \end{pmatrix}$$
  $$\phi_B(y) = 
  \begin{pmatrix}
  \beta & 1 \\
  0 & \beta^{-1} 
 \end{pmatrix}$$  
  We have then 
  $$\phi_B([x, y]) =  \begin{pmatrix}
 1 & \beta(\alpha^2 - 1)  \\
  0 & 1  
 \end{pmatrix}$$
so that $\phi_B([x, y])$ is invariant under conjugation by the matrix $\lambda_t := \bigl( \begin{smallmatrix} 
 1 & t  \\
  0 & 1  
\end{smallmatrix} \bigr)$.  
  
Projecting this representation to $\PSL(2,\R)$ gives a representation which is upper triangular, hence solvable and therefore non-injective.  Abusing notation, let $\phi_B$ denote the representation to $\PSL(2,\R)$.
 
Now let $\phi_A : A \to \PSL(2, \R)$ be Fuchsian and such that the image of the boundary curve $c$ under $\phi_A$ agrees with $\phi_B([x,y])$.  Such a representation exists for the following reasons.  First, if $\Sigma$ has negative Euler characteristic, genus $g>1$, and is not the once punctured torus, then $\Sigma_A$ will have negative Euler characteristic as well and admit a hyperbolic structure.  Secondly, the Fuchsian representation coming from the hyperbolic structure will send the element $[c]$ representing the boundary curve to a parabolic, so after conjugation we may assume that it is equal to $\phi_B([x,y])$, provided $\phi_B([x,y])$ is parabolic, i.e. $\beta(\alpha^2 - 1) \neq 0$.  
 
Finally, combine $\phi_A$ and $\phi_B$ to get a one-parameter family of representations $\phi_t$ of $\pi_1(\Sigma) = A \ast_C B$ to $\PSL(2,\R)$ as follows.  For $t \in \R$ and $g \in A \ast_C B$, let

$$\phi_t(g) = \left\{ \begin{array}{rcl} \phi_A(g) & \mbox{if} & g \in A \\ \lambda_t \circ \phi_B(g) \circ (\lambda_t)^{-1} & \mbox{if} & g \in B
\end{array}\right.$$

This representation is well defined because $\phi_B([x, y]) = \phi_A([x,y])$ and $\phi_B([x, y])$ is invariant under conjugation by $\lambda_t$.  

Our next goal is to show that for appropriate choice of $\alpha, \beta$ and $t$, the representation $\phi_t$  satisfies the criteria in Theorem \ref{main}.  The main difficulty will be checking that no element representing a simple closed curve is of finite order.  To do so, we employ a stronger form of Lemma 2 from \cite{DK}.  This is:

\begin{lemma} \label{transcendental}
Suppose $w \in A \ast_C B$ is a word of the form $w = a_1b_1a_2b_2 ... a_lb_l$ with $a_i \in A$ and $b_i \in B$ for $1 \leq i \leq l$. Assume that for each $i$, the matrix $\phi_t(a_i)$ has a nonzero 2,1 entry and $\phi_t(b_i)$ is hyperbolic. If $t$ is transcendental over the entry field of $\phi_0(A \ast_C B)$, then $\phi_t(w)$ is not finite order.  
\end{lemma}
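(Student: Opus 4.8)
The plan is to reduce the statement to a single assertion: that the (ordinary $\SL(2,\R)$-lifted) trace $\trace(\phi_t(w))$ is a \emph{non-constant} polynomial in $t$ with coefficients in the entry field $K$ of $\phi_0(A \ast_C B)$. This suffices, by the following transcendence argument. Any finite order element of $\PSL(2,\R)$ is elliptic, so a lift $M$ to $\SL(2,\R)$ satisfies $M^n = \pm I$ for some $n$; its eigenvalues are then roots of unity and $\trace(M) = 2\cos(\pi q)$ for some $q \in \Q$, an algebraic number. On the other hand, if $\trace(\phi_t(w)) = P(t)$ is non-constant with coefficients in $K$ and $t$ is transcendental over $K$, then $P(t)$ is transcendental over $K$, hence (since $\Q \subseteq K$) transcendental over $\Q$; in particular $P(t)$ is not algebraic, so $\phi_t(w)$ cannot be of finite order. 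Everything thus reduces to exhibiting a nonzero top-degree coefficient of $P(t)$.

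First I would record the shape of the factors. Since $\phi_B$ is upper triangular, each $\phi_B(b_i) = \bigl(\begin{smallmatrix} p_i & q_i \\ 0 & p_i^{-1}\end{smallmatrix}\bigr)$, and a direct computation gives
$$\phi_t(b_i) = \lambda_t\, \phi_B(b_i)\, \lambda_t^{-1} = \begin{pmatrix} p_i & q_i + t(p_i^{-1} - p_i) \\ 0 & p_i^{-1}\end{pmatrix}.$$
The off-diagonal entry is linear in $t$ with leading coefficient $p_i^{-1} - p_i$, which is nonzero precisely because $\phi_t(b_i)$ (equivalently $\phi_B(b_i)$) is hyperbolic, i.e.\ $p_i \neq \pm 1$. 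Writing $M_i := \phi_t(a_i) = \phi_A(a_i)$, which is independent of $t$ and has nonzero $(2,1)$ entry $c_i$ by hypothesis, we have $\phi_t(w) = \prod_{i=1}^{l} M_i\, \phi_t(b_i)$, whose entries are polynomials in $t$ of degree at most $l$.

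The crux is to show the degree-$l$ coefficient does not vanish. Decompose $\phi_t(b_i) = D_i + r_i(t)\,E$, where $D_i = \mathrm{diag}(p_i, p_i^{-1})$, $r_i(t) = q_i + t(p_i^{-1}-p_i)$, and $E = \bigl(\begin{smallmatrix} 0 & 1 \\ 0 & 0\end{smallmatrix}\bigr)$. Expanding the product $\phi_t(w) = \prod_i \bigl(M_i D_i + r_i(t)\, M_i E\bigr)$, the degree in $t$ is automatically at most $l$, and the only contribution to $t^l$ comes from selecting the top term of every factor, giving coefficient $\bigl(\prod_{i}(p_i^{-1}-p_i)\bigr)\,\prod_{i} M_i E$. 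A short induction (using $M_i E = \bigl(\begin{smallmatrix} 0 & s_i \\ 0 & c_i\end{smallmatrix}\bigr)$, with $s_i, c_i$ the $(1,1)$ and $(2,1)$ entries of $M_i$) shows $\prod_{i=1}^l M_i E = \bigl(\begin{smallmatrix} 0 & s_1 c_2 \cdots c_l \\ 0 & c_1 c_2 \cdots c_l\end{smallmatrix}\bigr)$, so the trace of the $t^l$-coefficient equals $\bigl(\prod_i (p_i^{-1}-p_i)\bigr)\, c_1 c_2 \cdots c_l$.

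Both factors here are nonzero — the first by hyperbolicity of the $b_i$, the second because each $a_i$ has nonzero $(2,1)$ entry — so $P(t) = \trace(\phi_t(w))$ is a genuine degree-$l$ polynomial in $t$ with coefficients in $K$. This is exactly the step where \emph{both} hypotheses of the lemma are used, and I expect the careful identification of the top-degree coefficient as a product of the rank-one matrices $M_i E$, together with the verification that this product has nonzero trace, to be the main technical point; the rest follows from the transcendence argument of the first paragraph.
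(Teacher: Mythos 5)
Your proof is correct, and it rests on the same underlying strategy as the paper's: show that $\trace(\phi_t(w))$ is a polynomial of degree exactly $l$ in $t$ with coefficients in the entry field $K$, so that for $t$ transcendental over $K$ its value is transcendental over $\Q$, while finite order in $\PSL(2,\R)$ would force the trace of any lift to be $2\cos(\pi q)$ with $q$ rational, an algebraic number. The difference lies in how the two halves are carried out. For the degree count, the paper simply cites DeBlois--Kent's inductive computation that the 2,2 entry of $\phi_t(w)$ has degree exactly $l$ while the 1,1 and 2,1 entries have degree at most $l-1$; you reprove this from scratch via the decomposition $\phi_t(b_i) = D_i + r_i(t)E$ with $E$ of rank one, which isolates the $t^l$-coefficient of the product as $\bigl(\prod_i (p_i^{-1}-p_i)\bigr) M_1E\,M_2E\cdots M_lE$ and exhibits its trace $\bigl(\prod_i(p_i^{-1}-p_i)\bigr)c_1c_2\cdots c_l \neq 0$; in fact your computation recovers the full DeBlois--Kent degree statement, since the first column of that coefficient matrix vanishes. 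For the finite-order step, the paper conjugates $\phi_t(w)$ to a rotation matrix and compares the 1,1 and 2,2 entries (a computation that, as printed, contains typos and treats quantities attached to a fixed $t$ as polynomials in $t$), whereas you work directly with the trace of an $\SL(2,\R)$ lift and the relation $M^n = \pm I$, which also handles the sign ambiguity of lifts and the identity case cleanly. What the paper's route buys is brevity, by leaning on the citation to \cite{DK}; what yours buys is a self-contained and arguably more transparent argument in which one sees exactly where each hypothesis (nonzero 2,1 entries of the $\phi_t(a_i)$, hyperbolicity of the $\phi_t(b_i)$) enters.
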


\noindent By \emph{entry field} of a group $\Gamma$ of matrices, we mean the field generated over $\Q$ by the collection of all entries of matrices in $\Gamma$.

\begin{remark} 
Lemma 2 of \cite{DK} is a proof that $\phi_t(w)$ is not the \emph{identity}, under the assumptions of Lemma \ref{transcendental}.    We use some of their work in our proof.  
\end{remark}

\begin{proof}[Proof of Lemma \ref{transcendental}]
In \cite{DK}, DeBlois and Kent show by a straightforward induction that the entries of $\phi_t(w)$ are polynomials in $t$, where the degree of the 2,2 entry is $l$, the degree of the 1,2 entry is at most $l$, and the other entries have degree at most $l-1$.  Now suppose that $\phi_t(w)$ is finite order.  Then it is conjugate to a matrix of the form 
$ \bigl( \begin{smallmatrix} 
 u & v  \\
  -v & u  
\end{smallmatrix} \bigr)$. 
where $u = \cos(\theta)$ and $v = \sin(\theta)$ for some rational angle $\theta$.  In particular, it follows from the deMoivre formula for sine and cosine that $u$ and $v$ are algebraic.  

Now suppose that the matrix conjugating $\phi_t(w)$ to 
$ \bigl( \begin{smallmatrix} 
 u & v  \\
  -v & u  
\end{smallmatrix} \bigr)$ has entries $a_{ij}$.  Then we have
$$ \phi_t(w) =  \begin{pmatrix}
u -  (a_{12}a_{22} -  a_{11}a_{21})v   &  (a_{12}^2  a_{11}^2) v \\
 -(a_{22}^2  a_{21}^2)v &   u + (a_{12}a_{22} +  a_{11}a_{21})v
 \end{pmatrix}$$
Looking at the 2,2 entry we see that $a_{12}a_{22} +  a_{11}a_{21}$ must be a polynomial in $t$ of degree $l$.  But this means that the 1,1 entry is also a polynomial in $t$ of degree $l$, contradicting Deblois and Kent's calculation.  This proves the lemma.
\end{proof}

To complete our construction, choose $t$ to be transcendental over the entry field of $\phi_0(A\ast_C B)$.  We want to show that no power of an element representing a simple closed curve lies in the kernel of $\phi_t$.  To this end, consider any word $w$ in $A \ast_C B$ that has a simple closed curve as a representative.  There are three cases to check.  First, if $w$ is a word in $A$ alone, then $\phi_t(w)$ is not finite order, since $\phi_t(A)$ is Fuchsian and therefore injective.  Secondly, if $w$ is a word in $B$, then an elementary geometric argument shows that $w$ can only be represented by a simple closed curve if it has one of the following forms: 
\begin{enumerate} 
\item $w=x ^{\pm 1}$ or $w= y^{\pm 1}$
\item $w = [x^{\pm1},y^{\pm1}]$
\item Up to replacing $x$ with $x^{-1}$, $y$ with $y^{-1}$ and interchanging $x$ and $y$, there is some $n \in \Z^+$ such that $w = x^{n_1}y x^{n_2} y ... x^{n_s}y$ where $n_i \in \{n, n+1\}$.  
\end{enumerate}
We leave this as an exercise for the reader.  
This classification of words representing simple closed curves in $\Sigma_B$ also follows from a much more general theorem in \cite{BS}.

By construction, no word of type 1, 2 or 3 is finite order provided that $\alpha^s \beta^k \neq 1$ for any integers $s$ and $k$ other than zero -- indeed, we only need to check words of type 3, and these necessarily have trace $\alpha^s\beta^k + \alpha^{-s}\beta^{-k}$ for some $s, k \neq 0$.  Note that in particular, under the condition that $\alpha^s \beta^k \neq 1$ for $s, k \neq 0$, all type 3 words are hyperbolic.  We will use this fact again later on.  

For the remaining case where $w$ is a word with letters in both $A$ and $B$, we claim that it can be written in a form where Lemma \ref{transcendental} applies.  To write it this way, use the following procedure: First take a simple representative $\gamma$ for $w$ and apply an isotopy so that each crossing of $\gamma$ with $c$ occurs in some small neighborhood of the basepoint $p$.  This gives us a well defined shortest path along $c$ to $p$ from each crossing.  After further isotopy, we may assume additionally that no segment of $\gamma$ together with the shortest path along $c$ from its endpoints to $p$ bounds a disc, and that $\gamma$ is transverse to $c$.  All this can be done without introducing any self-crossings in $\gamma$.  Now $\gamma$ is of the form $\gamma_1 \delta_1 \gamma_2 \delta_2 ... \gamma_l \delta_l$ where $\gamma_i$ is a simple arc in $\Sigma_A$ and $\delta_i$ a simple arc in $\Sigma_B$. Close each $\gamma_i$ and $\delta_i$ into a simple loop by connecting its endpoints to $p$ using the shortest path along $c$ and let $a_i \in A$ (respectively $b_i \in B$) be the corresponding element of the fundamental group.  See Figure \ref{fig1}.  This gives us a word $a_1b_1a_2b_2 ... a_lb_l$ equivalent to $w$ after cyclic reduction, and each $a_i$ is represented by a simple closed curve in $\Sigma_A$ and each $b_i$ by a simple closed curve in $\Sigma_B$.  The elimination of discs bounded between segments of $\gamma$ and short segments of $c$ ensures that each $a_i$ and $b_i$ is nontrivial.  

  \begin{figure*}
  \centerline{
    \mbox{\includegraphics[width=3.3in]{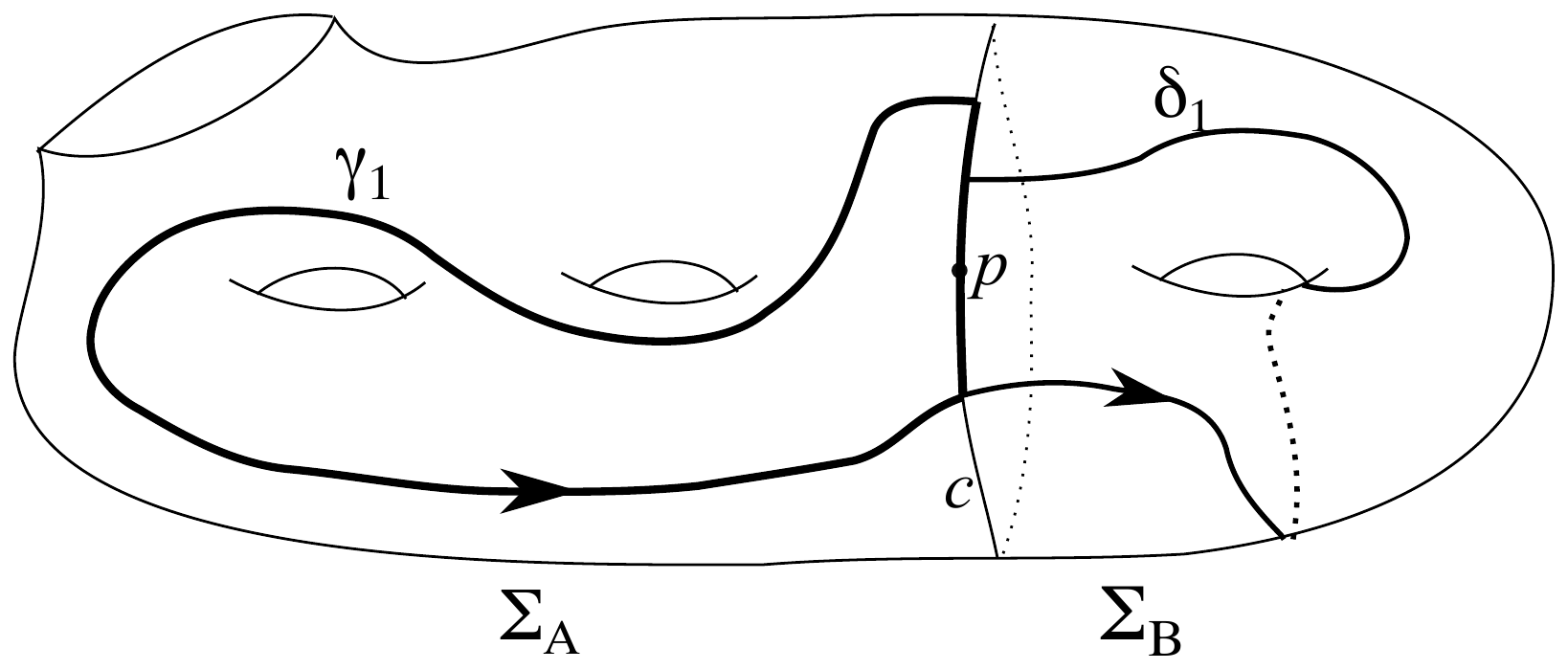}}}
 \caption{$a_1$ and $b_1$ in $w$, represented by $\gamma_i$ and $\delta_i$ joined to $p$}
  \label{fig1}
  \end{figure*}

We can also show that each $a_i$ either has a non-zero 2,1 entry or is represented by the curve $c$ or its inverse.  This is because $\phi_A$ is Fuchsian, so the only elements fixing infinity -- that is, with 2,1 entry equal to zero -- are powers of $c$, and no powers of $c$ other than $c^{\pm1}$ have a simple closed curve representative.  Similarly, the classification of words representing simple closed curves in $\Sigma_B$ shows that each $b_i$ is either hyperbolic or represented by $c$ or $c^{-1}$.  We claim that we may now rewrite $w$ to eliminate all appearances of $c$, keeping each $a_i$ with a non-zero 2,1 entry and each $b_i$ hyperbolic.  After doing so, we will have $w$ in a form where we can apply Lemma \ref{transcendental}.  

To rewrite $w$ in the desired form, first note that all $\gamma_i$ such that $a_i$ is represented by $c$ may be homotoped (simultaneously) into $\Sigma_b$ without introducing any self intersections of $\gamma$.  Thus, we can replace each such $\delta_{i-1} \gamma_i \delta_i$ with a simple loop $\delta'_i$ in $\Sigma_B$ alone, and rewrite $w = a_1b_1... a_{i-1} b'_{i} a_{i+1} ... a_lb_l$.  Reindex so that $w = a_1b_1a_2b_2 ... a_kb_k$ for $k < l$, and reindex the corresponding $\delta_i$ and $\gamma_i$ as well.  
Now repeat the procedure on this new word with each $b_i$: homotope all $\delta_i$ such that $b_i$ is represented by $c$ over to $\Sigma_A$ without introducing any self intersections of $\gamma$, and then replace each such $\gamma_i \delta_i \gamma_{i-1}$ with a simple loop $\gamma_i'$ in $\Sigma_B$ alone. Then rewrite $w$ so that, after reindexing, $w = a_1b_1a_2b_2 ... a_mb_m$ with $m<k$ and each $a_i$ and $b_i$ is a simple closed curve.  Repeat the process again with the $a_i$ of this new word.  The procedure ends when either no $a_i$ or $b_i$ is represented by $c$, or when $w$ is a word in $A$ or $B$ alone, represented by a simple loop in $\Sigma_A$ or $\Sigma_B$.  In the first case, Lemma \ref{transcendental} applies to show that $\phi_t(w)$ is not finite order.  In the second case, we have already shown that a word in $A$ or $B$ represented by a simple loop in $\Sigma_A$ or $\Sigma_B$ cannot be finite order.  

It remains only to remark that the representation $\phi_t$ is non-injective and that, by choosing appropriate parameters, we can produce uncountably many nonconjugate representations.   Non-injectivity follows immediately since $\phi_t(B)$ is solvable so the restriction of $\phi_t$ to $B$ is non-injective.  Now for any fixed $\alpha$ and $\beta$  (satisfying the requirement that $\alpha^s \beta^k \neq 1$ for all integers $s, k$), varying $t$ among transcendentals over the entry field of $\phi_0(A \ast_C B)$ produces uncountably many non-conjugate representations that are all non-injective, but have no power of a simple closed curve in the kernel.  This concludes the proof of Theorem \ref{main}, assuming that the surface was not the punctured torus.  

The punctured torus case is now immediate: any representation of the form of $\phi_B$ where $\alpha^s \beta^k \neq 1$ for any integers $s$ and $k$ is non-injective and our work above shows that no element represented by a simple closed curve has finite order.  Fixing $\alpha$ and varying $\beta$ produces uncountably many non-conjugate representations.

%---------------------------------------------------------------------------------

%\vspace{10pt}
%\noindent Department of Mathematics 

%\noindent University of Chicago 

%\noindent 5734 University Ave. Chicago, IL 60637

%\noindent E-mail: mann@math.uchicago.edu

\end{document}